\DeclareMathOperator{\Id}{Id}
\DeclareMathOperator{\ord}{ord}
\newcommand{\Z}{\Bbb{Z}}
\renewcommand{\P}{\Bbb{P}}
\newcommand{\CC}{\mathcal{C}}
\newcommand{\ep}{\epsilon}
\newtheorem*{rep@theorem}{\rep@title}
\newcommand{\newreptheorem}[2]{%
\newenvironment{rep#1}[1]{%
 \def\rep@title{#2 \ref{##1}}%
 \begin{rep@theorem}}%
 {\end{rep@theorem}}}
\renewenvironment{proof}[1][\proofname]{\par
  \vspace{-\topsep}
  \pushQED{\qed}%
  \normalfont
  \topsep0pt \partopsep0pt 
  \trivlist
  \item[\hskip\labelsep
        \itshape
    #1\@addpunct{.}]\ignorespaces
}{%
  \popQED\endtrivlist\@endpefalse
  \addvspace{6pt plus 6pt} 
}
\newtheorem{thm}{Theorem}
\newtheorem{result}{Result}[section]
\newtheorem{lem}[result]{Lemma}
\newtheorem{prp}[result]{Proposition}
\newtheorem{cor}[result]{Corollary}
\newtheorem{clm}[result]{Claim}
\theoremstyle{definition}
\newtheorem{rmk}[result]{Remark}
\newtheorem*{ack}{Acknowledgements}
\theoremstyle{remark}
\newcommand{\hide}[1]{}
\newcommand{\edit}[1]{}
\newcommand{\rough}[1]{}
\definecolor{darkgreen}{RGB}{75,150,75}
\newcommand{\review}[1]{}
\newcommand{\hides}[1]{}
\newcommand{\pub}[1]{}
\title{Lower bounds for multicolor van der Waerden numbers}
\author{Zach Hunter}
\address{Mathematical Institute, University of Oxford}
\email{zachary.hunter@exeter.ox.ac.uk}
\date{\today}
\begin{document}

\maketitle
\begin{abstract}We give an exponential improvement to the diagonal van der Waerden numbers for $r\ge 5$ colors.\end{abstract}

\section{Introduction}\label{intro}

For positive integer $n$, we write $[n]$ to denote $\{1,\dots,n\}$.

Given integers $k,r$, the van der Waerden number $w(k;r)$ is the smallest integer $N$ such that for any $r$-coloring $c:[N]\to [r]$, there exists a monochromatic $k$-term arithmetic progression. Currently, the best known upper bound
\[w(k;r) <2^{2^{r^{2^{2^{k+9}}}}}\]comes from Gowers' work on Szemer\'edi's theorem \cite{gowers}. Meanwhile, a lower bound of \[w(k;r)> \frac{r^{k-1}}{4k}\]follows from a result of Erd\H{o}s and Lov\'asz on the chromatic number of $k$-uniform hypergraphs in terms of maximum degree \cite{erdos}. Since then, there have been slight improvements to the lower bound, but only by factors that grow polynomially in $k$ (see e.g., \cite{berlekamp,kozik,szabo}).

Alternatively, one could discuss the inverse function $f_r(N)$, which is the smallest $k$ such that there exists an $r$-coloring $c:[N]\to [r]$ avoiding monochromatic arithmetic progressions of length $k$. The aforementioned bounds now state that
\[\log_{(5)} N - O_r(1)\le f_r(N) \le O\left(\frac{\log N}{\log r}\right) +O_r(1)\]
(here $\log_{(T)}$ denotes the $T$-times iterated logarithm (in base 2)).

In this paper, we shall improve the lower bound of $w(k;r)$ to the following.
\begin{thm}\label{main} For $r \ge 2$ with $r= a+3b$ (where $a\in \{2,3,4\}$), we have
\[w(k;r) > (a3^b)^{(1-o_r(1))k}.\]Alternatively, in terms of the inverse function, we prove \[f_r(N) \le O\left(\frac{\log N}{r}\right) +O_r(1).\]
\end{thm}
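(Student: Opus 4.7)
My strategy is to establish a product-type lemma combining colorings, and then iterate it using the decomposition $r = a+3b$ with $a \in \{2,3,4\}$ that maximizes the product $\prod r_i$ of parts $r_i \ge 2$ summing to $r$.

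\emph{Base cases.} For $r \in \{2,3,4\}$, the Erdős--Lovász bound $w(k;r) > r^{k-1}/(4k)$ already yields $w(k;r) > r^{(1-o(1))k}$, matching the theorem with $b=0$.

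\emph{Product lemma.} The central tool will be an inequality of the shape
\[ w(k; r_1 + r_2) \ge \frac{w(k; r_1) \cdot w(k; r_2)}{p(k)} \]
for some polynomial $p$. To construct the combined coloring from $c_i : [N_i] \to [r_i]$ (disjoint palettes, no monochromatic $k$-AP), I plan to use an embedding $\phi : [N_1] \times [N_2] \hookrightarrow [N]$ that tames the ``carry'' issue present in the naive bijection $\phi(x_1, x_2) = x_1 + (x_2-1) N_1$. A natural attempt is a sparse embedding with spacing $M \gg N_1$ so that every AP in $[N]$ translates into a genuine product AP in $[N_1] \times [N_2]$ (each coordinate individually forming an AP). On the image, color $\phi(x_1, x_2)$ by $c_1(x_1)$ on most positions but switch to $c_2(x_2)$ (in the disjoint palette) on a structured ``switch set'' to neutralize APs whose $x_1$-coordinate is constant. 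Leftover positions in $[N] \setminus \mathrm{im}(\phi)$ will need to be colored using a small fraction of the length so as not to introduce new monochromatic APs, possibly via a recursive lower-order construction.

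\emph{Iteration.} Given $r \ge 2$, write $r = a + 3b$ with $a \in \{2,3,4\}$; this split maximizes $\prod r_i$ among partitions $r = \sum r_i$ with $r_i \ge 2$, which is why $a \cdot 3^b$ appears. Iterating the product lemma $b$ times, starting from an $a$-coloring of length $\sim w(k;a)$ and combining with $b$ successive $3$-colorings of length $\sim w(k;3)$, yields
\[ w(k; r) \ge \frac{w(k; a) \cdot w(k; 3)^b}{p(k)^{b + O(1)}} \ge (a \cdot 3^b)^{(1 - o_r(1))k}, \]
since the polynomial-in-$k$ losses (with total exponent $O_r(1)$ for fixed $r$) are absorbed into the $o_r(1)$ term as $k \to \infty$. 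The second form of the theorem, $f_r(N) \le O(\log N / r) + O_r(1)$, then follows by inversion using $\log(a \cdot 3^b) = \Theta(r)$.

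\emph{Main obstacle.} The product lemma is the crux of the argument. The naïve product bijection is sabotaged by APs of common difference $N_1$: these have constant $x_1$-coordinate and, if colored by $c_1(x_1)$ alone, become monochromatic. Finding the right embedding and switching rule so that every AP in $[N]$ is handled either by $c_1$ (when the $x_1$-coordinate varies in an honest AP) or by $c_2$ (when it is constant, so $x_2$ varies in an honest AP), while simultaneously filling in the uncolored ``gap'' positions without introducing new monochromatic APs, is the delicate core of the construction.
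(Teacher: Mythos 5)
Your high-level plan --- lengths multiply while colors add, iterate along the decomposition $r=a+3b$, absorb polynomial losses into the $o_r(1)$ --- is exactly the skeleton of the paper's argument. But the proposal has a genuine gap: the ``product lemma'' $w(k;r_1+r_2)\ge w(k;r_1)w(k;r_2)/p(k)$ is precisely the crux, and you leave it unproven (you say so yourself under ``Main obstacle''). Moreover, the route you sketch has concrete problems. First, with a sparse embedding of spacing $M\gg N_1$ into $[N]$, the complement $[N]\setminus\mathrm{im}(\phi)$ is almost all of $[N]$, so ``filling in the gaps'' with no new colors is essentially the original problem again; the paper sidesteps carries and gaps entirely by working in $\Z/N\Z$ and using the Chinese Remainder Theorem (Lemma~\ref{modN}), so that the ambient object is literally a direct product $H_1\times H_2$ with no leftover positions. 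Second, a deterministic ``structured switch set'' must simultaneously kill APs whose first coordinate is constant (handled by $c_2$) and avoid creating monochromatic APs in the $c_2$-palette among APs whose first coordinate varies; it is not clear any deterministic rule does this, and the paper's solution is genuinely probabilistic: it shifts the second-coordinate coloring by an independent uniform $y_x$ in each fiber (Theorem~\ref{blackbox}), makes the ``switch'' color classes sparse via the Erd\H{o}s--Tur\'an $(p-1)^t$-element $p$-AP-free subset of $\Z/p^t\Z$ (Proposition~\ref{sparsify}), and then runs a union bound over the at most $|G|^2$ bad APs, using $\ord(H_1)\ge Q$ to guarantee each such AP meets many distinct fibers and hence many independent shifts. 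Without these ingredients (or substitutes for them), the combination step does not go through, so the proposal as written does not constitute a proof; note also that the paper's combination lemma (Lemma~\ref{synth}) is weaker than your clean product lemma --- it requires the second factor to be of the special form $\Z/p^t\Z$ with small sparsification parameter --- which is still enough because the second factor can be taken of size $r'^{(1-o(1))k}$ via the Erd\H{o}s--Lov\'asz bound (Corollary~\ref{Gcol}).
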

\begin{rmk}Theorem~\ref{main} gives an improved lower bound for $r\ge 5$ (when $k$ is large with respect to $r$).
\end{rmk}

\hide{Using a more involved proof, we can also improve in the case of $r=4$.
\begin{thm}\label{epgain} There exists some $\ep>0$ so that 
\[w(k;4)> (4+\ep)^k\] for all sufficiently large $k$.
\end{thm}\noindent We note that the arguments of Theorems~\ref{main} and \ref{epgain} can be combined to prove that $w(k;r) \ge (a(3+\ep')^b)^k$ for some absolute constant $\ep'>0$.}

Theorem~\ref{main} is achieved via a ``blow-up construction''. We note that in the analogous graph setting of Ramsey numbers, this type of blow-up argument is fairly trivial to pull off (indeed, this was done in a two-page paper by Lefmann in the 80's \cite{lefmann}). But in the arithmetic setting, the execution is less obvious and requires us to introduce randomness (in contrast to the deterministic blow-up methods available for graphs). We manage to achieve this by using a surprisingly useful trick involving direct products, which we believe is a novel technique for this area. \edit{I should probably check if similar ideas appear in the work of Berlekamp, though I find the paper hard to read}

\begin{rmk}We note that one can easily modify our arguments to slightly generalize our intermediate results and streamline some of our proofs. We omit such modifications here to avoid introducing unnecessary group-theoretic notation (namely, short exact sequences). The interested reader may find a write-up of our more general argument in \cite{huntergen}. 

\end{rmk}

\begin{ack}We thank Ben Green for useful conversations about related work (namely \cite{hunter}), where he pointed out that an argument could be more naturally described by thinking about direct products and short exact sequences. By following this suggestion, we gained a much better understanding of our argument, and eventually realized the results of this paper could be obtained. We also thank Ben Green for his feedback on an earlier version of this manuscript.

We additionally thank Daniel Altman, Zachary Chase, and Benny Sudakov for taking a look at this manuscript and giving helpful comments.

The vast majority of this work was done while the author was doing an internship at IST Austria. We are very thankful for their hospitality.
\end{ack}

\section{Preliminaries}\label{prelim}

In this paper, we use standard asymptotic notation. Specifically, given two functions $f=f(n),g=g(n)$, we say $f= O(g)$ or $f\gg g$ if there exists a constant $C>0$ such that $f(n) \le Cg(n)$ for all sufficiently large $n$. Also, we say $f = o(g)$ if $f(n)/g(n) \to 0$ as $n\to \infty$.  


For the purposes of this paper, it suffices to restrict our discussion to abelian groups. Hence, we will use additive notation. \hide{However, we note that nearly all of our work can be extended to the non-abelian setting in a meaningful way, see Remark~\ref{nonab} for more discussion.}

\hide{In this paper, we will need to generalize the concept of arithmetic progressions (AP's) of integers to the setting of arbitrary groups. To streamline notation, we adopt the convention that elements of a group ``remember'' the group they belong to, and will freque}

Given a group $G$ and integer $k$, a \textit{$k$-AP} is a set of the form $P = \{x+id:i\in\{0,\dots,k-1\}\}$ for some $x\in G,d\in G$; we say that $P$ is \textit{non-trivial} if $|P|>1$. We say a subset $S\subset G$ is \textit{$k$-AP-free} if it does not contain any non-trivial $k$-AP's. \edit{perhaps add some note about abuse of notation, since definitions implicitly depend on the group these elements belong to (similar to how Tao and Vu typically suppress the ambient group of additive sets)}

Also, for $d\in G$ and a $k$-AP $P\subset G$, we say $P$ has \textit{common difference $d$} if there exists $x \in G$ so that $P = \{x,x+d,\dots,x+(k-1)d\}$. We note that some $k$-AP's might not have a unique common difference, but they always have at least one. Additionally, we shall use the fact that a $k$-AP is non-trivial if and only if it has a common difference $d\neq 0_G$.

Now, given a group $G$ and integer $r$, we define $\kappa(G;r)$ to be the smallest integer $k$ such that there exists a coloring $c:G\to [r]$ that does not have monochromatic (non-trivial) $k$-AP's (i.e., each color class of $c$ is $k$-AP-free).

Finally, we will make use of direct products of groups. In what follows, we will mostly be exploiting the following fact: if $G = H_1\times H_2$, then the homomorphisms \[\pi_1:G \to H_1;(a,b)\mapsto a,\]\[\pi_2:G\to H_2;(a,b)\mapsto b\]are such that for each $g\in G\setminus \{0_G\}$, either $\pi_1(g)\neq 0_{H_1}$ or $\pi_2(g)\neq 0_{H_2}$ (or in other words, $\ker(\pi_1)\cap \ker(\pi_2) = \{0_G\}$).

\subsection{Basic facts}

Later on, we shall require the following well-known fact.
\begin{lem}\label{modN}Let $N_1,N_2\ge 1$ be coprime, and set $N= N_1N_2$. Then $\Z/N\Z \cong \Z/N_1\Z \times \Z/N_2\Z$.
\begin{proof}Write $G = \Z/N\Z,H_1= \Z/N_1\Z,H_2 = \Z/N_2\Z$. Obviously, $|G|= |H_1||H_2| = N$, thus it suffices to confirm that $G$ is cyclic (i.e., that there is some $g\in G$ such that $\ord(g):= \inf\{k>0:kg = 0_G\}$ is equal to $|G|$).

We consider $g= (1+N_1\Z,1+N_2\Z)\in G$. It is clear that $kg = 0_G$ if and only if $N_1\mid k$ and $N_2\mid k$. Since $N_1,N_2$ are coprime, we quickly see that $\ord(g) = N_1N_2 = N = |G|$, so $G$ is cyclic as desired.
\end{proof}
\end{lem}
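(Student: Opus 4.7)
This lemma is essentially a restatement of the Chinese Remainder Theorem, so the plan is to give the standard proof. I would define the natural reduction map
\[
\varphi : \Z/N\Z \to \Z/N_1\Z \times \Z/N_2\Z, \qquad k + N\Z \mapsto (k + N_1\Z,\, k + N_2\Z).
\]
Since $N_1 \mid N$ and $N_2 \mid N$, this is well-defined, and it is immediate from the definitions that $\varphi$ is a group homomorphism. The domain and codomain both have order $N = N_1 N_2$, so it suffices to verify injectivity (or, equivalently, surjectivity), and injectivity is the easier path.

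For injectivity, suppose $k + N\Z$ lies in $\ker(\varphi)$, so that $N_1 \mid k$ and $N_2 \mid k$. Then $\lcm(N_1, N_2) \mid k$, and since $\gcd(N_1, N_2) = 1$ we have $\lcm(N_1, N_2) = N_1 N_2 = N$. Hence $N \mid k$, as desired. The coprimality hypothesis is used exactly once, at this step.

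A stylistically slightly different route --- one closer in spirit to the paper's subsequent emphasis on direct products and generating elements --- is to go the other way and show $H_1 \times H_2$ is cyclic of order $N$. For this I would exhibit the explicit generator $g = (1 + N_1\Z,\, 1 + N_2\Z)$ and observe that $kg = 0$ iff $N_1 \mid k$ and $N_2 \mid k$, which (again by coprimality) forces $N \mid k$, so $\ord(g) = N = |H_1 \times H_2|$. Either approach is routine; the only nontrivial arithmetic input is the implication $\gcd(N_1,N_2) = 1 \Rightarrow \lcm(N_1, N_2) = N_1 N_2$, so there is no meaningful obstacle here.
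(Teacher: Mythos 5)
Your proposal is correct, and your second (``stylistically different'') route is precisely the paper's proof: the paper exhibits the element $(1+N_1\Z,\,1+N_2\Z)$ of $\Z/N_1\Z\times\Z/N_2\Z$ and shows it has order $N_1N_2$, so the product is cyclic of order $N$ and hence isomorphic to $\Z/N\Z$. Your primary route via the reduction map $\varphi$ and its kernel is an equally valid standard CRT argument; both proofs use coprimality only through the implication $N_1\mid k,\ N_2\mid k \Rightarrow N_1N_2\mid k$, so there is no substantive difference.
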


We will also often implicitly make use of the following fact.
\begin{prp}Let $\pi:G\to H$ be a homomorphism. If $P\subset G$ is a $k$-AP with common difference $d$, then $\pi(P)\subset H$ is a $k$-AP with common difference $\pi(d)$.\begin{proof}Essentially immediate from definitions ($P = \{g,g+d,\dots,g+(k-1)d\}$ for some $g\in G$, thus $\pi(P) = \{\pi(g),\pi(g)+\pi(d),\dots,\pi(g)+(k-1)\pi(d)\}$).\end{proof}\end{prp}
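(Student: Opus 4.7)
The plan is to unwind the definitions directly. Since the statement is essentially a bookkeeping exercise, I would not expect any technical obstacle, and the proof will be short.

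First, I would use the definition of a $k$-AP with common difference $d$ to write $P = \{g + id : i \in \{0,1,\dots,k-1\}\}$ for some base point $g \in G$. Next, I would apply $\pi$ elementwise to obtain
\[\pi(P) = \{\pi(g + id) : i \in \{0,1,\dots,k-1\}\}.\]
Then, using the two defining properties of a group homomorphism, namely additivity and compatibility with integer scaling (which follows from additivity by induction: $\pi(id) = i \pi(d)$ because $id$ is shorthand for the $i$-fold sum $d + \cdots + d$), I would rewrite each term as $\pi(g + id) = \pi(g) + i \pi(d)$. This gives
\[\pi(P) = \{\pi(g) + i \pi(d) : i \in \{0,1,\dots,k-1\}\},\]
which matches the definition of a $k$-AP in $H$ with base point $\pi(g)$ and common difference $\pi(d)$.

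The only point worth a second glance is that $\pi$ need not be injective, so it is possible for some of the $\pi(g+id)$ to coincide and for $|\pi(P)| < k$. However, this causes no problem: the definition of a $k$-AP in the paper only requires that the set be presentable in the form $\{x + id : i \in \{0,\dots,k-1\}\}$, and it does not require the $k$ elements listed to be distinct. (Non-triviality, which requires $|P|>1$, is a separate condition and is not claimed here; indeed it can genuinely fail, as when $\pi(d) = 0_H$ so that $\pi(P)$ collapses to a singleton.) Hence no additional argument is needed, and the proof is complete.
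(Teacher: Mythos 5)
Your proof is correct and follows essentially the same route as the paper: write $P=\{g+id: i\in\{0,\dots,k-1\}\}$ and apply $\pi$ termwise using the homomorphism property. The remark about possible collapse of elements under $\pi$ is a sensible sanity check but adds nothing beyond the paper's one-line argument.
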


\hide{\begin{rmk}\label{nonab}For those interested in the non-abelian setting, we can similarly define a $k$-AP in a non-abelian group $G$ to be a set $P =\{gh^i:i\in \{0,\dots,k-1\}\}$ for for some $g\in G,h\in G\setminus \{\Id_G\}$. One can then similarly extend the notions of $k$-AP-free sets, common differences, and the function $\kappa(G;r)$ to the non-abelian setting. We remark that our arguments almost never implicitly assume that unspecified groups are abelian (the only exception occurs in the proof of Lemma~\ref{cosetfree}, where we gloss over the difference between left-cosets and right-cosets, but this is easily resolved). Hence our proofs should continue to work and give the same results in this more general setting.
\end{rmk}}

\section{General Machinery}\label{mach}

We first need a key lemma, which we break into two parts.

\begin{lem}\label{cosetfree}Let $G = H_1\times H_2$ and for $i=1,2$ define the homomorphism $\pi_i:G\to H_i;(h_1,h_2)\mapsto  h_i$.

For each $x\in H_1$, choose some $k$-AP-free subset $Y_x \subset H_2$.

Set
\[A:= \{(x,y):x\in H_1,y\in Y_x\}.\]Let $P\subset G$ be a $k$-AP with common difference $d = (0_{H_1},d')$ for some $d'\neq 0_{H_2}$, then $P$ is not contained in $A$.
\begin{proof}Consider any $g=(x,y)\in G$. 

We observe that
\[\pi_2(\{g,g+d,\dots, g+(k-1)d\}\cap A)= \{y,y+d',\dots, y+(k-1)d'\}\cap Y_x.\]Since $Y_x\subset H_2$ is $k$-AP-free, and $d'\neq 0_{H_2}$, we have that $\{y,y+d',\dots,y+(k-1)d'\}\not\subset Y_x$. So it then quickly follows that $\{g,g+d,\dots ,g+(k-1)d\}\not\subset A$, as desired. The result follows.\end{proof}
\end{lem}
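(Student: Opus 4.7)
The plan is to unpack the structure of the $k$-AP $P$ using the assumption on its common difference, reduce to a statement purely inside $H_2$, and then invoke the $k$-AP-freeness of some $Y_x$. Concretely, since the common difference is $d=(0_{H_1},d')$, every element of $P$ has the same first coordinate. So I would write $P = \{(x,y+id') : i=0,\ldots,k-1\}$ for some fixed $x\in H_1$ and $y\in H_2$, i.e.\ $P$ lies entirely in the fiber $\pi_1^{-1}(x)$, which as a set is just $\{x\}\times H_2$.

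Now I would suppose toward a contradiction that $P\subseteq A$. By definition of $A$, membership $(x,y+id')\in A$ is equivalent to $y+id'\in Y_x$ (since the first coordinate $x$ is fixed). Thus $P\subseteq A$ would force $\{y,y+d',\ldots,y+(k-1)d'\}\subseteq Y_x$. Because $d'\neq 0_{H_2}$, this set is a non-trivial $k$-AP in $H_2$, contradicting the hypothesis that $Y_x$ is $k$-AP-free. Hence $P\not\subseteq A$.

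Equivalently (and this is essentially what the lemma as displayed already does), one can phrase the argument via $\pi_2$: observe that $\pi_2(P) = \{y,y+d',\ldots,y+(k-1)d'\}$ is a non-trivial $k$-AP in $H_2$, and that for any $g=(x,y)\in G$ one has $\pi_2\bigl((g+\Z d)\cap A\bigr)\subseteq Y_x$. Since $Y_x$ contains no non-trivial $k$-AP, the full image cannot lie in $Y_x$, so $P\not\subseteq A$.

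There is no real obstacle: the whole content of the lemma is the trivial observation that a translate with common difference in $\ker(\pi_1)$ stays in one $\pi_1$-fiber, combined with the fact that restricted to each such fiber, $A$ looks like $\{x\}\times Y_x$. The only care needed is in the indexing — noting that ``$k$-AP-free'' in the paper's convention rules out \emph{non-trivial} $k$-APs, which is exactly why the hypothesis $d'\neq 0_{H_2}$ is essential to derive the contradiction.
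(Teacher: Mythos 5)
Your proposal is correct and follows essentially the same argument as the paper: fix the first coordinate (since $d\in\ker(\pi_1)$), reduce to the progression $\{y,y+d',\dots,y+(k-1)d'\}$ inside $Y_x$, and contradict $k$-AP-freeness using $d'\neq 0_{H_2}$. The only difference is cosmetic — you phrase it directly on the fiber while the paper phrases it via $\pi_2$ of the intersection with $A$.
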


\begin{lem}\label{shortex}Let $G = H_1\times H_2$ and for $i=1,2$ define the homomorphism $\pi_i:G\to H_i;(h_1,h_2)\mapsto  h_i$. 

Suppose we have sets $S=\{x_1,\dots,x_m\}\subset H_1$ and $Y_1,\dots,Y_m\subset H_2$ that are each $k$-AP-free in their respective groups.

Then,
\[A := \bigcup_{i=1}^m\{(x_i,y): y\in Y_i\}\]is $k$-AP-free with respect to $G$.
\begin{proof}Consider any $g\in G$ and $d \in G\setminus \{0_G\}$. 

Suppose for sake of contradiction that $P:= \{g,g+d,\dots,g+(k-1)d\}\subset A$. Then, we must clearly have
\[\pi_1(\{g,g+d,\dots,g+(k-1)d\}) \subset \pi_1(A) = S.\]Because $S\subset H_1$ is $k$-AP-free, and $\pi_1(P)\subset H_1$ is a $k$-AP with common difference $\pi_1(d)$, this means that $\pi_1(d) = 0_{H_1}$ must hold.

Now by the assumption $d\neq 0_G$, it follows that $d =(0_{H_1},d')$ for some $d'\neq 0_{H_2}$. We are then done by appealing to Lemma~\ref{cosetfree}. Indeed, as the empty set is $k$-AP-free, we see that our set $A$ satisfies the conditions of Lemma~\ref{cosetfree}. Thus it is impossible for $A$ to contain a non-trivial $k$-AP with common difference $d$ (which we just assumed is of the form $(0_{H_1},d')$ for $d'\neq 0_{H_2}$), giving us our contradiction. \end{proof}
\end{lem}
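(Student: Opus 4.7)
The plan is to argue by contradiction: suppose some non-trivial $k$-AP $P = \{g,g+d,\dots,g+(k-1)d\}$ with $d\neq 0_G$ is contained in $A$. The natural strategy is to split on whether $\pi_1(d)=0_{H_1}$ or not, using each projection to reduce to the corresponding $k$-AP-free hypothesis.

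First I would project $P$ via $\pi_1$. Every element of $A$ has first coordinate in $S$, so $\pi_1(P)\subset S$. Since $\pi_1(P)$ is a $k$-AP in $H_1$ with common difference $\pi_1(d)$ (by the proposition just stated), and $S$ is $k$-AP-free, this forces $\pi_1(d)=0_{H_1}$. Combined with $d\neq 0_G$, we get $d=(0_{H_1},d')$ with $d'\neq 0_{H_2}$.

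The cleanest way to finish is to reduce directly to Lemma~\ref{cosetfree}. Extend the finite list $Y_1,\dots,Y_m$ to a family indexed by all of $H_1$ by setting $Y_{x_i} := Y_i$ for each $i$ and $Y_x := \emptyset$ for $x\in H_1\setminus S$. The empty set is vacuously $k$-AP-free, so the hypotheses of Lemma~\ref{cosetfree} are satisfied, and in fact the set built in that lemma coincides with $A$. Since our $d$ is of the form $(0_{H_1},d')$ with $d'\neq 0_{H_2}$, Lemma~\ref{cosetfree} tells us $P\not\subset A$, contradiction. (Alternatively, one can avoid citing Lemma~\ref{cosetfree} and argue by hand: all $k$ points of $P$ share first coordinate $x_i:=\pi_1(g)\in S$, so $\pi_2(P)$ is a non-trivial $k$-AP with common difference $d'$ contained in $Y_i$, contradicting $k$-AP-freeness of $Y_i$.)

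I don't foresee a real obstacle; the whole purpose of Lemma~\ref{cosetfree} was to handle the ``vertical'' case $\pi_1(d)=0_{H_1}$, so this lemma is essentially an assembly step: use $\pi_1$ to rule out $\pi_1(d)\neq 0_{H_1}$, then quote the previous lemma for the remaining case.
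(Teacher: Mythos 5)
Your proposal is correct and follows essentially the same route as the paper: project via $\pi_1$ to force $\pi_1(d)=0_{H_1}$ using the $k$-AP-freeness of $S$, then handle the remaining ``vertical'' case by extending the $Y_i$'s with empty sets (vacuously $k$-AP-free) and invoking Lemma~\ref{cosetfree}. Your parenthetical direct argument for the vertical case is also fine, but it is not needed and does not change the structure of the proof.
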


We can now present a sufficient condition for when we can do a ``blow-up construction''. In Section~\ref{trick}, we will proceed to obtain a more convenient consequence of the below (Lemma~\ref{synth}), which will allow us to deduce Theorem~\ref{main}. 
\begin{thm}\label{blackbox}
Let $r_1,r_2,r_3,k$ be positive integers, and $\delta>0$ be some constant.

Let $G = H_1\times H_2$ and for $i=1,2$ define the homomorphism $\pi_i:G\to H_i;(h_1,h_2)\mapsto h_i$.

Also, suppose that $\ord(H_1) \ge Q$.

Furthermore, suppose there exist colorings $C_1:H_1\to [r_1],C_2:H_2\to [r_2+r_3]$ such that:
\begin{enumerate}[label={(\arabic*)}]
    \item\label{1a} the color classes of $C_1,C_2$ are both $k$-AP-free;
    \item\label{2a} $|C_2^{-1}(r_2+[r_3])|\le \delta |H_2|$;
    \item\label{3a} and $|G|^2\le \delta^{-\min\{Q,k\}}$.
\end{enumerate}
Then, there exists a coloring $c:G\to [r_1r_2+r_3]$ that avoids monochromatic non-trivial $k$-AP's.
\begin{proof}We shall construct a coloring $\CC:G\to ([r_1]\times [r_2]) \cup [r_3]$ randomly, and prove that $\CC$ avoids monochromatic non-trivial $k$-AP's with positive probability. By fixing an outcome without monochromatic non-trivial $k$-AP's and identifying $([r_1]\times [r_2])\cup [r_3]$ with $[r_1r_2+r_3]$, we get our desired $c$.

For each $x \in H_1$, we define $y_x$ to be a element of $H_2$ chosen uniformly at random (and independently of all other random variables). Then, for $g= (x,y) \in G$, we set
\[\CC(g) =\begin{cases}(C_1(x),C_2(y-y_x))&\textrm{ if }C_2(y-y_x) \in [r_2]\\
C_2(y-y_x)-r_2&\textrm{ otherwise.}
\end{cases}\]
It is straight-forward to verify that $\CC$ is well-defined on $G$, and takes values in $([r_1]\times [r_2])\cup  [r_3]$.

We are left to prove that $\CC$ lacks monochromatic non-trivial $k$-AP's with positive probability. For $(i,j)\in [r_1]\times [r_2]$, we always have that $\CC^{-1}((i,j))$ is $k$-AP-free by Lemma~\ref{shortex}. Also, by Lemma~\ref{cosetfree}, for every $d = (0_{H_1},d')$ with $d'\neq 0_{H_2}$, we have that any $k$-AP $P\subset G$ with common difference $d$ is not monochromatic under $\CC$.

It remains to consider $k$-AP's $P$ with common difference $d \in G \setminus \pi_1^{-1}(0_{H_1})$. We shall proceed by a union bound. By counting the ways to choose $g,d$, we see there are at most $|G|(|G|-1)<|G|^2$ such $k$-AP's $P\subset G$. Also, by the above, we only need to worry about the color classes $\CC^{-1}(i)$ for $i\in [r_3]$.

Hence it suffices to show that for each $k$-AP $P\subset G$ with common difference $d\in G\setminus \pi_1^{-1}(0_{H_1})$,
\[\P(P\subset \CC^{-1}([r_3]))\le 1/|G|^2.\]

We fix an arbitrary such $P$. By assumption, $P$ has a common difference $d \in G\setminus \pi_1^{-1}(0_{H_1})$. It then follows that $\pi_1(d) \neq 0_{H_1}$. Hence, by our assumption that $\ord(H_1)\ge Q$, we have that $\pi_1(P)$ takes at least $\ell:= \min\{Q,k\}$ distinct values $x_1,\dots,x_\ell \in H_1$. Thus, by the independence of the variables $y_{x_1},\dots,y_{x_\ell}$, we have
\[\P(P \subset \CC^{-1}([r_3])) \le \left(\frac{|C_2^{-1}(r_2+[r_3])|}{|H_2|}\right)^\ell.\]By Properties~\ref{2a} and \ref{3a}\hide{our assumption on $|C_2^{-1}(r_2+[r_3])|$}, the LHS is at most $\delta^\ell \le 1/|G|^2$ as desired.
\end{proof}
\end{thm}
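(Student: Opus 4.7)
The plan is to construct $c$ via a randomized blow-up. We would like to combine $C_1$ and $C_2$ in a product fashion — assigning $(x,y) \mapsto (C_1(x), C_2(y))$ — which uses $r_1(r_2+r_3)$ colors. The saving comes from treating the two blocks of colors in $C_2$ differently: on the $r_2$ ``common'' colors we pay the product cost $r_1 r_2$, but on the $r_3$ ``rare'' colors (which by \ref{2a} occupy only a $\delta$-fraction of $H_2$) we merge the fibers, paying only $r_3$. To kill the monochromatic $k$-APs that this merging would otherwise create across different fibers $\{x\} \times H_2$, we first shift the $C_2$ coloring on each fiber by an independent random translate $y_x \in H_2$. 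Concretely, sample $y_x \in H_2$ uniformly and independently for each $x \in H_1$, and define
\[\CC(x,y) = \begin{cases}(C_1(x), C_2(y-y_x)) & \text{if } C_2(y-y_x)\in [r_2],\\ C_2(y-y_x) - r_2 & \text{otherwise.}\end{cases}\]

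Showing $\CC$ is $k$-AP-free with positive probability splits into three parts. First, for each product color $(i,j)\in [r_1]\times[r_2]$, the color class has $\pi_1$-image inside $C_1^{-1}(i)$ and its fiber over each such $x$ is a translate of $C_2^{-1}(j)$; both are $k$-AP-free by \ref{1a}, so Lemma~\ref{shortex} shows the color class is $k$-AP-free deterministically (for every choice of the $y_x$). Second, for each rare color $i\in [r_3]$ and each common difference $d = (0_{H_1},d')$ with $d'\neq 0_{H_2}$, the color class has fibers that are translates of $C_2^{-1}(r_2+i)$, which are $k$-AP-free, so Lemma~\ref{cosetfree} rules out monochromatic $k$-APs of this common difference deterministically.

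The third case — rare colors with common difference $d$ satisfying $\pi_1(d)\ne 0_{H_1}$ — is where randomness enters. I would use a union bound. The number of $k$-APs in $G$ is at most $|G|^2$. Fix such a $k$-AP $P$. Because $\pi_1(d)\ne 0_{H_1}$ and every nonzero element of $H_1$ has order at least $Q$, the image $\pi_1(P)$ contains $\ell := \min\{Q,k\}$ distinct values $x_1,\dots,x_\ell$. The shifts $y_{x_1},\dots,y_{x_\ell}$ are independent, so by \ref{2a},
\[\P\bigl(P\subset \CC^{-1}([r_3])\bigr) \le \bigl(\delta\bigr)^{\ell},\]
and \ref{3a} gives $\delta^\ell \le |G|^{-2}$, so the union bound closes. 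Combining with the two deterministic cases, with positive probability $\CC$ has no monochromatic non-trivial $k$-AP; fixing such an outcome and identifying $([r_1]\times[r_2])\cup[r_3]$ with $[r_1 r_2 + r_3]$ yields the desired coloring $c$.

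The main thing to get right is the interplay between the deterministic and probabilistic parts: the shifts $y_x$ do nothing when $\pi_1(d) = 0$, because the AP lives in a single fiber and all shifts agree, so that case must be settled by the structural Lemma~\ref{cosetfree}. The order hypothesis $\ord(H_1)\ge Q$ is exactly what guarantees that in the remaining case $\pi_1(P)$ hits enough distinct $x$'s for the independence to bite; without it a $d$ of small $\pi_1$-order would yield only a handful of independent trials and the union bound would fail.
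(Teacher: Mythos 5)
Your proposal is correct and follows essentially the same argument as the paper: the same randomly shifted product coloring, the same deterministic handling of the product colors via Lemma~\ref{shortex} and of differences with $\pi_1(d)=0_{H_1}$ via Lemma~\ref{cosetfree}, and the same union bound using the independence of the $\ell=\min\{Q,k\}$ shifts guaranteed by the order hypothesis. No gaps.
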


\section{A sparsification trick}\label{trick}

We first need the following construction of $k$-AP-free sets, which was originally observed by Erd\H{o}s and Tur\'an in \cite{erdosturan}. We provide a short proof of the statement which mimics the ideas from our proof of Lemma~\ref{shortex}.

\begin{prp}\label{sparsify}Consider a prime $p$ and some integer $t\ge 1$. Let $N= p^t$ and $G = \Z/N\Z$.

There exists a $p$-AP-free set $S\subset G$ with $|S| = (p-1)^t = (1-1/p)^t N$. 
\begin{proof}Start by defining $A_1 = \{1,\dots,p-1\} = [p-1]\subset \Z$. Then, for $t\ge 1$, let $A_{t+1} =  A_1+p\cdot A_t = \{a_1+pa_t:a_1\in A_1,a_t\in A_t\}$. In other words, $A_t$ shall be the set of integers $n\in [p^t]$ using only digits from $A_1$ in base $p$.

We claim that we may take $S_t = A_t+p^t\Z \subset \Z/p^t\Z$. It is clear that $|S_t| = |A_t| = (p-1)^t$, as desired, so it remains to check that $S_t$ is $p$-AP-free. We shall induct on $t$.

First, when $t=1$, we note that the only non-trivial $p$-AP $P\subset \Z/p\Z$ is $\Z/p\Z$ itself (since each $d\neq 0_{\Z/p\Z}$ generates $\Z/p\Z$). Thus as $S_1$ is a proper subset of the group, it will not contain such $P$. Thus $S_1$ is $p$-AP-free.

Now assuming $S_1$ and $S_t$ is $p$-AP-free for some $t\ge 1$, we'll show the same holds for $S_{t+1}$. Take any $p$-AP $P\subset \Z/p^{t+1}\Z$ and suppose $P\subset S_{t+1}$. Considering the projection \[\pi:\Z/p^{t+1}\Z\to \Z/p\Z; n+p^{t+1}\Z\mapsto n+p\Z,\] we get that \[\pi(P)\subset \pi(S_{t+1}) = S_1.\] Thus as $S_1$ is $p$-AP-free, $\pi(P)\subset \Z/p\Z$ must be a trivial $p$-AP. In particular this means that $P$ has common difference $d = pd'+p^{t+1}\Z$ for some integer $d'$.

Next, in the spirit of Lemma~\ref{cosetfree}, we notice that $P$ now corresponds to a $p$-AP in $\Z/p^t\Z$ with common difference $d=d'+p^t\Z$. Specifically, writing $P = \{g,g+d,\dots,g+(p-1)d\}$ for some $g = g_0+pg'+p^{t+1}\Z$ and $P' = \{g',g'+d',\dots,g'+(p-1)d'\}+p^t\Z\subset \Z/p^t\Z$, we have that
\[(P\cap S_{t+1})-g_0 = p\cdot (P'\cap S_t)\](assuming $p\nmid g_0$, because otherwise the RHS would be empty, contradicting the assumption that $P\subset S_{t+1}$). Since we are assuming $P\subset S_{t+1}$, this should imply that $P' \subset S_t$. As $S_t$ is $p$-AP-free, $P'$ must be trivial, meaning that $p^t|d'$ and thus $d = 0_{\Z/p^{t+1}\Z}$ (making $P$ trivial as well). Consequently, $S_{t+1}$ is $p$-AP-free, as it does not contain non-trivial $p$-AP's.\end{proof}
\end{prp}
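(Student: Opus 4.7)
The plan is to construct $S$ as the set of residues in $\Z/p^t\Z$ whose (unique) base-$p$ representative in $\{0,1,\dots,p^t-1\}$ uses only nonzero digits, and then to show $p$-AP-freeness by induction on $t$, mirroring the projection argument from Lemma~\ref{cosetfree} and Lemma~\ref{shortex}. Concretely I would set $A_1 = \{1,\dots,p-1\}$, recursively $A_{t+1} = A_1 + p \cdot A_t$, and then $S_t = A_t + p^t\Z \subset \Z/p^t\Z$. The cardinality is immediate: at each base-$p$ digit there are $p-1$ nonzero choices, so $|S_t| = (p-1)^t = (1 - 1/p)^t N$.

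For the base case $t=1$, since $p$ is prime, every nonzero $d \in \Z/p\Z$ generates the whole group, so the only non-trivial $p$-AP in $\Z/p\Z$ is $\Z/p\Z$ itself; since $0 \notin S_1$, no such AP lies in $S_1$.

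For the inductive step, suppose $S_t$ is $p$-AP-free and take a $p$-AP $P \subset S_{t+1}$ with common difference $d$. Applying the reduction-mod-$p$ homomorphism $\pi:\Z/p^{t+1}\Z \to \Z/p\Z$, one obtains a $p$-AP $\pi(P) \subset \pi(S_{t+1}) = S_1$ with common difference $\pi(d)$. By the base case $\pi(P)$ is trivial, so $\pi(d) = 0$ and $d = pd'$ for some $d' \in \Z/p^{t+1}\Z$, and moreover all elements of $P$ share a common nonzero residue $g_0 \in \{1,\dots,p-1\}$ modulo $p$. Writing $P = \{g_0 + pg' + j\cdot pd' : 0 \le j \le p-1\}$, containment in $S_{t+1}$ is equivalent (after subtracting $g_0$ and dividing by $p$) to containment of the $p$-AP $P' = \{g' + jd' : 0 \le j \le p-1\} \subset \Z/p^t\Z$ inside $S_t$. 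The inductive hypothesis forces $P'$ to be trivial, hence $d' \equiv 0 \pmod{p^t}$, hence $d \equiv 0 \pmod{p^{t+1}}$, so $P$ itself is trivial.

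The main point requiring care is the identification of the "quotient" $p$-AP inside $\Z/p^t\Z$ with the original $p$-AP inside $\Z/p^{t+1}\Z$ after factoring out the common first digit. This is precisely the observation already packaged in Lemma~\ref{cosetfree}: once $\pi(d) = 0$, the behavior of $P$ on its one nontrivial "coordinate" is what matters. I expect this bookkeeping step to be the only real obstacle, and it is handled cleanly by an explicit scaling-and-shifting calculation. Everything else is direct induction.
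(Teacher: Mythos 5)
Your proposal is correct and follows essentially the same route as the paper: the same digit-based construction $S_t = A_t + p^t\Z$, the same base case in $\Z/p\Z$, and the same inductive step of projecting to $\Z/p\Z$ to force $p \mid d$ and then passing to the quotient $p$-AP in $\Z/p^t\Z$. The scaling-and-shifting identification you highlight is exactly the bookkeeping the paper carries out via the identity $(P\cap S_{t+1})-g_0 = p\cdot(P'\cap S_t)$.
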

\begin{rmk}Secretly, what we've done is applied a generalized version of Lemma~\ref{shortex}, using the fact that $\Z/p^{t+1}\Z = \Z/p^t\Z \rtimes \Z/p\Z$ (i.e., replacing direct products with semi-direct products). Further details on such ideas are given in \cite{huntergen}.
\end{rmk}

We can now prove the following technical lemma, which is the synthesis of everything proven thus far. 

We remind our readers that for a group $G$ and integer $r$, that $\kappa(G;r)$ denotes the minimum $k$ such that there exists an $r$-coloring of $G$ where each color class is $k$-AP-free. \edit{consider inserting more naturally}
\begin{lem}\label{synth}Consider positive integers $r,r',k,Q$.

Let $H_1$ be any group and take $H_2 = \Z/p^t\Z$ for some prime $p\le k$. Now let $G= H_1\times H_2$.

Furthermore suppose that:
\begin{enumerate}[label={(\arabic*)}]
    \item\label{1b} we have\[\max\{\kappa(H_1;r),\kappa(H_2;r')\}\le k;\]
    \item\label{2b} we have $\ord(H_1) \ge Q$;
    \item\label{3b} we have $(1-(1-1/p)^t)^{-\min\{Q,k\}}\ge |G|^2$.
\end{enumerate}Then $\kappa(G;r+r')\le k$.
\begin{proof}Since we assume $\kappa(H_2;r')\le k$, there exists a coloring $c_2:H_2\to [r']$ avoiding monochromatic non-trivial $k$-AP's.

Let $\delta = 1-(1-1/p)^t$. Applying Proposition~\ref{sparsify}, we may find a $p$-AP-free (and hence $k$-AP-free, as $p\le k$) set $S\subset H_2$ such that $|H_2 \setminus S|\le \delta |H_2|$. We then define the coloring $C_2:H_2\to [r'+1]$, so that \[C_2(y) = \begin{cases}1 &\textrm{if }y\in S\\
1+c_2(y)&\textrm{otherwise.}
\end{cases}\]It is clear that $C_2$ is well-defined, and takes values in $[r'+1]$. Meanwhile, we see that $C_2$ lacks monochromatic non-trivial $k$-AP's, as each of its color classes is a subset of $k$-AP-free set. Finally, $C_2$ has the important property that $|C_2^{-1}(1+[r'])|= |H_2\setminus S|\le \delta|H_2|$.

Unpacking the rest of our assumptions, we may invoke Theorem~\ref{blackbox} with $r_1 = r,r_2 =1,r_3 = r'$ to get the desired result. 
\end{proof}\end{lem}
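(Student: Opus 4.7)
The plan is to assemble the three pieces already in hand: the colorings guaranteed by hypothesis \ref{1b}, the sparse $p$-AP-free set from Proposition~\ref{sparsify}, and the blow-up machinery of Theorem~\ref{blackbox}. The main observation is that Theorem~\ref{blackbox} demands a coloring $C_2$ of $H_2$ whose ``dangerous'' color classes (those indexed by $r_2+[r_3]$) have small total measure $\delta$, and sparsification lets us manufacture exactly such a $C_2$ by folding together many classes into one big sparse color.

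First, using $\kappa(H_2;r')\le k$, I fix a coloring $c_2:H_2\to[r']$ whose color classes are $k$-AP-free. Separately, Proposition~\ref{sparsify} (applied to $H_2=\Z/p^t\Z$) yields a $p$-AP-free set $S\subset H_2$ of size $(1-1/p)^t|H_2|$; since $p\le k$, this $S$ is in fact $k$-AP-free. I then define $C_2:H_2\to[r'+1]$ by assigning color $1$ to all of $S$ and color $1+c_2(y)$ to each $y\in H_2\setminus S$. Every color class of $C_2$ sits inside either $S$ or inside some $c_2^{-1}(j)$, and hence is $k$-AP-free; moreover
\[|C_2^{-1}(1+[r'])|=|H_2\setminus S|\le \delta|H_2|,\quad \text{where } \delta:=1-(1-1/p)^t.\]

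Next, using $\kappa(H_1;r)\le k$, I fix a coloring $C_1:H_1\to[r]$ whose color classes are also $k$-AP-free. Now I verify the three hypotheses of Theorem~\ref{blackbox} with $r_1=r$, $r_2=1$, $r_3=r'$, and the $\delta$ above. Hypothesis \ref{1a} is immediate from the preceding constructions. Hypothesis \ref{2a} is exactly the bound on $|C_2^{-1}(1+[r'])|$ just displayed. Hypothesis \ref{3a} reads $|G|^2\le \delta^{-\min\{Q,k\}}$, which is precisely condition \ref{3b} of the lemma, and hypothesis \ref{2b} supplies the assumption $\ord(H_1)\ge Q$ needed by Theorem~\ref{blackbox}.

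Applying Theorem~\ref{blackbox} then delivers a coloring $c:G\to[r\cdot 1+r']=[r+r']$ with no monochromatic non-trivial $k$-APs, which is exactly the statement $\kappa(G;r+r')\le k$. There is no real obstacle here beyond matching the parameter conventions between the two statements; the only substantive content is the observation that combining sparsification with Theorem~\ref{blackbox} converts ``$r'+1$ colors of $H_2$'' into an effective gain of only one extra color on $G$, which is exactly what we need to iterate the blow-up in the next section.
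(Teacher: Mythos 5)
Your proposal is correct and follows essentially the same route as the paper: build $C_2$ by merging the sparse complement of the Erd\H{o}s--Tur\'an set $S$ with the coloring $c_2$, take $C_1$ from $\kappa(H_1;r)\le k$, and invoke Theorem~\ref{blackbox} with $r_1=r$, $r_2=1$, $r_3=r'$ and $\delta=1-(1-1/p)^t$. The verification of the three hypotheses of Theorem~\ref{blackbox} matches the paper's argument.
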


\section{Proof of Theorem 1}

We start by recalling the following coloring result of Erd\H{o}s and Lov\'asz.
\begin{prp}[{\cite[Theorem~2]{erdos}}]\label{hypercol} If $H$ is a $k$-uniform hypergraph with $\Delta(H)\le r^{k-1}/4k$ (i.e., each vertex is contained by at most $r^{k-1}/4k$ hyperedges), then $H$ has a proper $r$-coloring of $V(H)$.
\end{prp}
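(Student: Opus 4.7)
The plan is to invoke the Lov\'asz Local Lemma. Color each vertex of $H$ independently and uniformly at random from $[r]$, and for each hyperedge $e\in E(H)$ let $A_e$ denote the bad event that $e$ is monochromatic. Then $\P(A_e)=r\cdot r^{-k}=r^{1-k}$ for every $e$, since there are $r$ possible ``common'' colors and each of the $k$ vertices takes a prescribed color with probability $1/r$.

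Next I would analyze the dependency structure. The event $A_e$ is determined by the colors of the $k$ vertices of $e$, and hence is mutually independent of the family $\{A_{e'}:e'\cap e=\emptyset\}$. Since every vertex lies in at most $\Delta(H)$ hyperedges, $A_e$ shares a vertex with at most $k\Delta(H)-1$ other hyperedges (excluding $e$ itself). Writing $p:=r^{1-k}$ and $d:=k\Delta(H)-1$, the symmetric Lov\'asz Local Lemma guarantees that if $ep(d+1)\le 1$, i.e.\ if $ek\Delta(H)\cdot r^{1-k}\le 1$, then with positive probability none of the $A_e$ occur, producing a proper $r$-coloring of $V(H)$.

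It remains to check that the hypothesis $\Delta(H)\le r^{k-1}/(4k)$ forces the LLL inequality. Rearranging gives $4k\Delta(H)\le r^{k-1}$, and since $e<4$ this implies $ek\Delta(H)<r^{k-1}$, exactly what is needed. The only real choice in the argument is which version of the LLL to deploy; the symmetric form comfortably absorbs the constant $4$, and a refined asymmetric form could even sharpen things slightly, though this is unnecessary here. The ``main obstacle'' is thus purely bookkeeping: once one notes that $A_e$ depends only on the $k$ vertices of $e$ and counts the incident hyperedges via $\Delta(H)$, the proof is a one-line application of the Local Lemma.
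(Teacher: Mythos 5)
Your argument is correct: with a uniform random $r$-coloring, $\P(A_e)=r^{1-k}$, each $A_e$ is mutually independent of the events on disjoint edges, the dependency degree is at most $k\Delta(H)-1$, and the symmetric Local Lemma condition $e\cdot r^{1-k}\cdot k\Delta(H)\le e/4<1$ follows from $\Delta(H)\le r^{k-1}/4k$. Note that the paper itself gives no proof of this proposition --- it is quoted as Theorem~2 of Erd\H{o}s--Lov\'asz --- and your Local Lemma argument is essentially the original one from that source, so there is nothing to compare beyond saying it matches the standard proof.
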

\begin{rmk}As noted in Section~\ref{intro}, Proposition~\ref{hypercol} gave (up to factors of $k^{O(1)}$) the previous best known lower bound for $w(k;r)$. Here, we will apply Proposition~\ref{hypercol} to groups, which loses an extra factor of $k$, but we will not be concerned about subexponential factors of shape $\exp(-o(k))$.
\end{rmk}

We now get the following corollary.

\begin{cor}\label{Gcol}Let $G$ be a group and $r,k$ be integers, where $\ord(G)\ge k$, and $|G| \le r^{k-1}/4k^2$.

Then $\kappa(G;r) \le k$.\begin{proof}
Consider the hypergraph $H$ with vertex set $V(H)=G$ and hyperedge set $E(H) = \{P:P\textrm{ is a }k\textrm{-AP}\}$. Since no $d\in G\setminus\{0_G\}$ has $\ord(d) < k$, we may conclude that $H$ is $k$-uniform (i.e., that every hyperedge has cardinality $k$). 

Next, we note that each vertex $v$ is contained in at most $k(|G|-1)$ hyperedges. Indeed, there are $|G|-1$ choices of the common difference $d \in G\setminus\{0_G\}$ and at most $k$ distinct $k$-AP's with common difference $d$ that can contain $v$. Hence, we have $\Delta(H)\le k(|G|-1)\le k|G|$ (where $\Delta(H)$ denotes the maximum degree of vertices in $H$). 

We can then invoke Proposition~\ref{hypercol}. Indeed, since $\Delta(H)\le k|G| \le \frac{r^{k-1}}{4k}$, the assumptions of Proposition~\ref{hypercol} are satisfied, and so there is a proper $r$-coloring of $V(H)$. In other words, there exists $c:V(H) \to [r]$ so that no edge of $H$ is monochromatic under $c$. By the definition of $H$, this means $c$ is a coloring of $G$ where every color class is $k$-AP-free, implying $\kappa(G;r)\le k$ as desired.
\end{proof}
\end{cor}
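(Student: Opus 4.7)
The plan is to build an auxiliary hypergraph on $G$ and invoke Proposition \ref{hypercol}. Specifically, I would take $H$ to be the hypergraph with vertex set $V(H) = G$ whose hyperedges are exactly the non-trivial $k$-APs in $G$. A proper $r$-coloring of $V(H)$ is then precisely a coloring $c : G \to [r]$ in which no color class contains a non-trivial $k$-AP, which is what $\kappa(G;r) \le k$ asserts.

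The first thing to verify is that $H$ is genuinely $k$-uniform. A non-trivial $k$-AP $\{x, x+d, \ldots, x+(k-1)d\}$ with $d \ne 0_G$ consists of exactly $k$ distinct elements iff $\ord(d) \ge k$. Since the hypothesis $\ord(G) \ge k$ means precisely that every non-zero element of $G$ has order at least $k$, every non-trivial $k$-AP in $G$ carries $k$ distinct members.

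Next I would bound the maximum degree of $H$. Any $k$-AP containing a fixed vertex $v$ is pinned down by its common difference $d \in G \setminus \{0_G\}$ together with an index $i \in \{0, 1, \ldots, k-1\}$ specifying where $v$ sits within the AP. So at most $k(|G| - 1) \le k|G|$ hyperedges contain $v$, giving $\Delta(H) \le k|G|$. The hypothesis $|G| \le r^{k-1}/(4k^2)$ then yields $\Delta(H) \le r^{k-1}/(4k)$, exactly what Proposition \ref{hypercol} requires, and the proposition supplies the desired $r$-coloring.

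I do not anticipate any real obstacle; the argument is essentially the classical Erd\H{o}s--Lov\'asz lower bound for $w(k;r)$ transposed from $\Z$ to an arbitrary group. The only mild price is the extra factor of $k$ highlighted in the remark preceding the corollary, which arises because in a general group one must separately account for the index of $v$ within the AP in addition to the common difference, rather than letting the ambient order on $\Z$ do this bookkeeping automatically.
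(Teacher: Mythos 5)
Your proposal is correct and follows essentially the same route as the paper: the same auxiliary hypergraph on $G$, the same $k$-uniformity check via $\ord(G)\ge k$, the same degree bound $\Delta(H)\le k|G|\le r^{k-1}/4k$, and the same appeal to Proposition~\ref{hypercol}. The only cosmetic difference is that you explicitly restrict the hyperedges to non-trivial $k$-AP's, which the paper does implicitly.
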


We are nearly able to deduce our main result. We just need the following convenient lemma.

\begin{lem}\label{workhorse} Fix $\ep \in (0,1/10),C>0$ and some integer $r\ge 2$. There exists an absolute constant $K =K(\ep,C,r)$ such that for all $k> K$ the following holds:

Suppose $p\in ((1-\ep)k,k]$ is prime and that $H_1$ is some group with 
$|H_1| \le C^k, \ord(H_1) \ge (1-\ep)k$ and $\kappa(H_1;r') \le k$ (for some $r'$). 

Then taking $t= \lfloor k(1-2\ep)\frac{\log r}{\log k}\rfloor$, and defining $H_2:= \Z/p^t\Z$, we have:
\begin{enumerate}[label={(\arabic*)}]
    \item \label{1c} $(1-\ep)r^{1-2\ep} \le |H_2|^{1/k} \le r^{1-2\ep}$;
    \item \label{2c} $\kappa(H_2;r)\le k$;
    \item \label{3c} $\kappa(H_1\times H_2;r+r') \le k$.
\end{enumerate}
\begin{proof}What follows is just some menial asymptotic calculations which tell us that Corollary~\ref{Gcol} and Lemma~\ref{synth} can both be invoked, giving the desired result. We encourage the reader to not dwell on the details. The main point is that since $t= o(k)$, we have that the $\delta$ from the proof of Lemma~\ref{synth} will be $o(1)$. And at the same time, we'll also have $|H_1\times H_2|^{1/k} = O(1)$. Thus Lemma~\ref{synth} can be used. 

As stated above, let $t = \lfloor k(1-2\ep)\frac{\log r}{\log k}\rfloor$. Now by definition, $k^t = cr^{(1-2\ep)k}$ for some $c\in [1/k,1]$.

So, assuming $p\in ((1-\ep)k,k]$, we get the bounds 
\[p^t \le k^t \le r^{(1-2\ep)k}\]and
\[p^t \ge (1-\ep)^t k^t \ge \frac{(1-\ep)^t}{k}r^{(1-2\ep)k}.\]
Furthermore, assuming $k$ is sufficiently large, the lower bound can be weakened to $p^t \ge (1-\ep)^k r^{(1-2\ep)k}$.

Now define $H_2 = \Z/p^t\Z$. Now obviously $|H_2| = p^t$ so by the above bounds, condition~\ref{1c} is satisfied.

Also, by our upper bound above, we have that $|H_2|\le r^{(1-\ep)k}/4k^2$ for sufficiently large $k$. Thus, by Lemma~\ref{Gcol} we have that $\kappa(H_2;r)\le (1-\ep)k$ (here we recall that $\ord(H_2) = p\ge (1-\ep)k$ to apply said lemma).

At last we consider some $H_1$ as in the statement, and seek to apply Lemma~\ref{synth}. Take $\delta = 1-(1-1/p)^t \le t/p \le \frac{(1-2\ep)\log r}{(1-\ep)\log k} = O_r(1/\log k)$ (here we use the fact that $(1-x)^t\ge 1-tx$ for $t\ge 1$).

So clearly, as $k\to \infty$, we have that $\delta \downarrow 0$. Thus, for sufficiently large $k$ (with respect to $\ep,C,r$), we have \[\delta^{-(1-\ep)k} \ge (Cr)^{2k} \ge |H_1\times H_2|^2\] (recalling $|H_1|\le C^k$ and $|H_2|\le r^{(1-2\ep)k} <r^k$). Hence condition~\ref{2c} is satisfied.

Thus, recalling the assumption $\ord(H_1) \ge (1-\ep)k$, we may invoke Lemma~\ref{synth} to get that $\kappa(H_1\times H_2;r+r') \le k$, as desired. Whence, we conclude condition~\ref{3c} is satisfied.\end{proof}
\end{lem}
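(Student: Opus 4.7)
The plan is to verify the three conclusions in order; once the right tools are lined up, everything reduces to asymptotic bookkeeping.

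For \ref{1c}, I would just compute $|H_2| = p^t$. Since $t = \lfloor k(1-2\ep)\frac{\log r}{\log k}\rfloor$ satisfies $k^t \le r^{(1-2\ep)k}$, and $p \in ((1-\ep)k, k]$, the upper bound $p^t \le k^t \le r^{(1-2\ep)k}$ is immediate, while $p^t \ge ((1-\ep)k)^t \ge (1-\ep)^k r^{(1-2\ep)k}/k$ using $t \le k$. Taking $k$-th roots yields both bounds in \ref{1c} for $k$ large enough (the factor of $k$ and the power $(1-\ep)^k$ can be absorbed into $(1-\ep)$ on the $k$-th root side).

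For \ref{2c}, I would apply Corollary~\ref{Gcol} to $H_2 = \Z/p^t\Z$. The mild wrinkle is that $\ord(H_2) = p$ may be strictly less than $k$, so I would instead run the corollary with the smaller target length $k' := \lfloor (1-\ep)k\rfloor \le p$. The size hypothesis $|H_2| \le r^{k'-1}/4(k')^2$ then reduces, via \ref{1c}, to comparing $r^{(1-2\ep)k}$ with $r^{(1-\ep)k}/\operatorname{poly}(k)$, which holds for $k$ large. This supplies an $r$-coloring of $H_2$ with no monochromatic $k'$-AP, and therefore no monochromatic $k$-AP, so $\kappa(H_2;r) \le k$.

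For \ref{3c}, I would invoke Lemma~\ref{synth} with colors $r+r'$, length $k$, and $Q = \lfloor (1-\ep)k\rfloor$. Conditions \ref{1b} and \ref{2b} of that lemma are handed to us by the hypothesis $\kappa(H_1;r')\le k$, by \ref{2c}, and by $\ord(H_1) \ge (1-\ep)k \ge Q$. The only nontrivial check is \ref{3b}: $(1-(1-1/p)^t)^{-\min\{Q,k\}} \ge |H_1\times H_2|^2$. I would bound $\delta := 1-(1-1/p)^t \le t/p = O_r(1/\log k)$ by Bernoulli, and $|H_1 \times H_2| \le C^k \cdot r^{(1-2\ep)k} \le (Cr)^k$. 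Then $\delta^{-Q} \ge (\log k)^{\Omega_r(k)}$ grows faster than any fixed exponential in $k$ and so eventually exceeds $(Cr)^{2k}$, giving \ref{3b}.

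There is no real obstacle here beyond calibration: the value of $t$ is chosen precisely so that $|H_2|^{1/k}$ sits just below $r$ (enabling Corollary~\ref{Gcol}) while simultaneously forcing $\delta\to 0$ quickly enough to dominate the $|G|^2$ factor in Lemma~\ref{synth}. Once $t$ is fixed, the rest is mechanical.
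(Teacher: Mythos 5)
Your proposal is correct and follows essentially the same route as the paper: conditions \ref{1c}--\ref{3c} are obtained by the same computation of $p^t$, the same application of Corollary~\ref{Gcol} at the reduced length $\lfloor(1-\ep)k\rfloor\le p$ (you handle the integrality slightly more carefully than the paper, which simply writes $(1-\ep)k$), and the same invocation of Lemma~\ref{synth} with $Q=\lfloor(1-\ep)k\rfloor$, $\delta\le t/p=O_r(1/\log k)$ and $|H_1\times H_2|\le (Cr)^k$. The only nitpick is in \ref{1c}: if you degrade $(1-\ep)^t$ to $(1-\ep)^k$ before cancelling the stray factor $1/k$, the $k$-th root comes out as $(1-\ep)r^{1-2\ep}k^{-1/k}$, just short of the stated bound, so you should instead keep $(1-\ep)^t$ and use $t=o(k)$ to note $(1-\ep)^t/k\ge(1-\ep)^k$ for large $k$, exactly as the paper does.
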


We can now deduce Theorem~\ref{main}.
\begin{proof}[Proof of Theorem~\ref{main}]Fix $r=a+3b$ (where $a\in \{2,3,4\}$). For every $\ep \in (0,1/10)$, we shall show that whenever $k$ is sufficiently large with respect to $\ep,r$, we have $\kappa(\Z/N\Z;k)\le r$ for some $N > (1-\ep)^{(b+1)k}(a3^b)^{(1-2\ep)k}$. As $(1-\ep)^{b+1}(a3^b)^{(1-2\ep)} \to a3^b$ as $\ep \downarrow 0$, we see that $\kappa(\Z/N\Z;r)\le k$ for some some $N> (a3^b)^{(1-o(1))k}$, giving our desired result (since $\kappa(\Z/N\Z;r)\le k$ implies $w(k;r)>N$).

Now, fix some $\ep \in (0,1/10)$.

By the prime number theorem, there exists $K' = K_\ep'$ such that for $k>K'$ there exists $b+1$ distinct primes $p_0,\dots, p_b \in ((1-\ep)k,k]$.

Next let $K=\max\{K(\ep,1,a),K(\ep,a3^b,3)\}$ be the value given by Lemma~\ref{workhorse}, and take $K^* = \max\{K,K'\}$. 

Consider $k>K^*$. As $K^*\ge K'$, we may fix distinct primes $p_0,\dots,p_b\in ((1-\ep)k,k]$. 

Now take $t_0 = \lfloor k(1-2\ep)\frac{\log a}{\log k}\rfloor,t' = \lfloor k (1-2\ep)\frac{\log 3}{\log k}\rfloor $ (like in the statement of Lemma~\ref{workhorse}). We set $H_0 = \Z/p_0^{t_0}\Z$ and for $i=1,\dots, b$ we let $H_i = \Z/p_i^{t'}\Z$.

We define $G_{-1}$ to be the trivial group on one element, take $G_0 = G_{-1}\times H_0$, and for $i=1,\dots,b$ we define $G_i = G_{i-1}\times H_i$. By construction, we have that $|G_i|\le |G_b| \le (a3^b)^k$ for all $i=0,1,\dots,b$. We also note that $\ord(G_i) > (1-\ep)k$ for all $i=0,1,\dots,b$, since $\ord(G_i)$ must be some divisor of $|G_i|$ (besides one) by Lagrange's theorem.

Thus, by induction (applying Lemma~\ref{workhorse}), we'll have that $\kappa(G_b;r)\le k$. Finally, since $p_0,\dots, p_b$ are distinct primes, and $|H_i|$ is a power of $p_i$ for $i =0,1,\dots,b$, we may repeatedly apply Lemma~\ref{modN} to deduce that $G_b \cong \Z/N\Z$ where $N = \prod_{i=0}^b |H_i| \ge ((1-\ep)^{b+1}(a3^b)^{1-2\ep})^k$. So, we get our desired result.\end{proof}

\hide{This leads us to the following result, which will quickly imply Theorem~\ref{main}.
\begin{thm}\label{groupmain} Consider $r\ge 2$ with $r= a+3b$ (where $a\in \{2,3,4\}$), and a prime $p$ which is sufficiently large with respect to $r$. We have
\[\kappa(\Z/N\Z;r) \le p\]for some $N> \frac{(a3^b)^p}{(16p^3)^{1+b}}$.

\end{thm}Before proving Theorem~\ref{groupmain}, we deduce our main result (Theorem~\ref{main}).
\begin{proof}[Proof of Theorem~\ref{main} assuming Theorem~\ref{groupmain}]First, we note that $\kappa(\Z/N\Z,r)\le p$ implies that $w(p;r) > N$. Next, we recall a well-known consequence of prime number theorem, that for integer $k\ge 2$ there exists a prime $p \in [(1-\ep_k)k,k]$ (where $\ep_k\downarrow 0$ as $k\to \infty$). 

It follows by Theorem~\ref{groupmain} that $w(k;r) > \frac{(a3^b)^{(1-\ep_k)k}}{(16k^3)^{1+b}} = (a3^b)^{(1-o_r(1))k}$, as desired.\end{proof}

\begin{proof}[Proof of Theorem~\ref{groupmain}]Fix a prime $p$, and for $i=2,3,4$ set $t_i$ to be the largest $t$ where $p^t \le \frac{i^{p-1}}{4p^2}$. For each $i$, we have \[p^{t_i} > i^p \frac{1}{i4p^3}  \ge i^p\frac{1}{16p^3}\]
and 
\[t_i = O(p/\log p).\]Set $\delta = 1-(1-1/p)^{t_3} \le t_3/p = O(1/\log p)$ (here we used the fact that $(1-x)^t \ge 1-tx$ for $t\ge 1$).

Next, for $i =2,3,4$ let $H_i := \Z/p^{t_i}\Z$. By Corollary~\ref{Gcol}, we have that $\kappa(H_i;i)\le p$ for each $i\in \{2,3,4\}$ (indeed, any group of the form $\Z/p^t\Z$ has $\ord(g)\ge p$ for each $g\neq 0_{\Z/p^t\Z}$).

For $a\in \{2,3,4\},b\ge 0$, we set $G_{a,b} = \Z/p^{t_a+bt_3}\Z$. We will prove that $\kappa(G_{a,b};a+3b) \le p$ whenever $\delta^{-p} \ge  (a3^b)^{2p} \ge |G_{a,b}|^2 $. Since $\delta \downarrow 0$ as $p \to \infty$, while $(a3^b)^2 = O(1)$ for fixed $a,b$, we see that $\kappa(G_{a,b};a+3b) \le p$ for all sufficiently large $p$ (which implies our result, since $|G_{a,b}| = p^{t_a+bt_3}> \frac{a^k 3^{bk}}{(16p^3)^{1+b}}$).

We fix $a\in \{2,3,4\}$ and induct on $b$. For $b=0$, this is immediate from the above observation that $\kappa(H_a;a)\le p$ and the fact that $H_a \cong G_{a,0}$. Now, for $b>0$, we may apply Proposition~\ref{modN} to find maps $\alpha,\beta$ so that \[0\longrightarrow H_3 \stackrel{\alpha}{\longrightarrow}  G_{a,b} \stackrel{\beta}{\longrightarrow} G_{a,b-1} \longrightarrow 0\] is a short exact sequence. We can then apply Lemma~\ref{synth} to deduce that $\kappa(G_{a,b};a+3b)\le p$ given $\delta^{-p} \ge |G_{a,b}|^2$.\end{proof}}

\hide{
For inductive purposes, it is easier to prove a group-theoretic version of Theorem~\ref{main}. Namely, we will show the following.
\begin{thm}\label{groupmain}For each integer $t \ge 0$, there is a constant $k_t$ such that, for $k>k_t$, there exists an integer $N\ge \left(\frac{3^{k-2}}{4k^2}\right)^{t+1}$ with $\kappa(\Z/N\Z;6t+3)\le k$.
\begin{rmk} Our main result (Theorem~\ref{main}) follows immediately from Theorem~\ref{groupmain}, in light of the fact that $\kappa(\Z/N\Z;r) \le p$ implies $w(p;r)>N$, and the well-known consequence of prime number theorem that for an integer $k$ there exists a prime $p \in [(1-o(1))k,k]]$.\end{rmk}

\begin{proof}We say an integer $N$ is $(k;r)$-nice if \begin{itemize}
    \item $N$ is odd, 
    \item $n\nmid N$ for any $n\in \{2,\dots,k-1\}$,
    \item $\kappa(\Z/N\Z;r)\le k$.
\end{itemize} 

We have the following.
\begin{clm}\label{nice}Given $r>1$, there exists some $k_0 = k_0(r)$ so that for $k>k_0$, there exist some \[ \frac{r^{k-1}}{12k^2}\le N\le \frac{r^{k-1}}{4k^2}\] which is $(k;r)$-nice.
\begin{proof}Let $m = \frac{r^{k-1}}{4k^2}$. By Bertrand's postulate, for sufficiently large $k$ there exists some prime $\frac{m}{3}\le p \le m$ (we use $1/3$ rather than $1/2$ to ignore rounding). 

With $k$ assumed to be large, it is clear that $p$ is odd and not divisible by any $n\in \{2,\dots,k-1\}$. We will be done by showing that $G:= \Z/p\Z$ satisfies $\kappa(G;r)\le k$.

Consider the hypergraph $H$ with vertex set $V(H)=G$ and hyperedge set $E(H) = \{P:P\textrm{ is a }k\textrm{-AP}\}$. Since $p$ is not divisible by any $n \in \{2,\dots,k-1\}$ we can conclude that $H$ is $k$-uniform (i.e., that every hyperedge has cardinality $k$). Next, we note that each vertex $v$ is contained in at most $k(p-1)$ hyperedges. Indeed, there are $p-1$ choices of the common difference $d \in G\setminus\{0_G\}$ and at most $k$ distinct $k$-AP's with common difference $d$ that can contain $v$.

Hence, we have $\Delta(H)\le k(p-1)\le km$ (where $\Delta(H)$ denotes the maximum degree of vertices in $H$). We can then invoke the following coloring result of Erd\H{o}s and Lov\'asz.\begin{prp}[{\cite[Theorem~2]{erdos}}]\label{hypercol} If $H'$ is a $k$-uniform hypergraph with $\Delta(H')\le r^{k-1}/4k$, then $H'$ has a proper $r$-coloring of $V(H')$.
\end{prp}\noindent Since $\Delta(H)\le km = \frac{r^{k-1}}{4k}$, the assumptions of Proposition~\ref{hypercol} are satisfied, and so there is a proper $r$-coloring of $V(H)$. In other words, there exists $c:V(H) \to [r]$ so that no edge of $H$ is monochromatic under $c$. By the definition of $H$, this means $c$ is a coloring of $G$ where every color class is $k$-AP-free, implying $\kappa(G;r)\le k$ as desired.

\end{proof}
\end{clm}
\begin{rmk}As noted in Section~\ref{intro}, Proposition~\ref{hypercol} gave (up to factors of $k^{O(1)}$) the previous best known lower bound for $w(k;r)$. \hide{Here, we lose an extra factor of $k$ because we are working in $\Z/N\Z$ rather than $\Z$, however this }

\end{rmk}

\noindent Generally, for $k\ge 1$, we define $M(k):= \frac{3^{k-2}}{4k^2}$. An immediate corollary of Claim~\ref{nice} is that there exists $K^* = k_0(3)$ such that for $k>K_0$, there exists an integer $N\in [M(k),3M(k)]$ which is $(k;3)$-nice.

We now argue by induction, that for every $t\ge 0$, there exists $K_t$ such that for $k>K_t$ there exists an odd integer $N \in [M(k)^{t+1},(3M(k))^{t+1}]$ where $\kappa(\Z/N\Z;6t+3)\le k$. By the previous paragraph, we see that $K_0$ exists (it suffices to take $K_0 =K^*$).

Now suppose that $K_t$ exists for some $t\ge 0$. We will argue $K_{t+1}$ exists as well. \edit{I'm not the biggest fan of how this is phrased, but whatever}

Consider $k>\max\{K_t,K^*\}$. By the definition of $K_t$, there exists an odd integer $N_1 \in [M(k),(3M(k))^{t+1}]$ with $\kappa(\Z/N_1\Z;6t+3)\le k$. Meanwhile, by Claim~\ref{nice}, there exists $N_2\in [M(k),3M(k)]$ which is $(k;3)$-nice. 

Now take $N = N_1N_2$. It is clear that $N$ is an odd integer belonging to $[M(k)^{t+2},(3M(k))^{t+2}]$. We let $G,G_1,G_2$ respectively denote the groups $\Z/N\Z,\Z/N_1\Z,\Z/N_2\Z$. By Proposition~\ref{modN}, there are maps $\alpha,\beta$ so that
\[0\longrightarrow G_1 \stackrel{\alpha}{\longrightarrow}  G \stackrel{\beta}{\longrightarrow} G_2 \longrightarrow 0\] is a short exact sequence. 

We now wish to apply Lemma~\ref{synth} (with $r=6t+3,r'=3,N_0=N_1,H=G_1,K=G_2$). By our choice of $N_1,N_2$, the first two bullets of Lemma~\ref{synth} are automatically satisfied (here, we use the fact that $N_2$ is $(k;3)$-nice to ensure the RHS in bullet two holds, this is the only time need the full-strength of the $(k;3)$-nice property).

It remains to check that the third bullet of Lemma~\ref{synth} holds, whenever $k$ is sufficiently large with respect to $t$.

By Bertrand's postulate, there exists a prime $k/3 \le p\le k$. Due to various functions being monotone, we see that \[(1-(1-1/p)^{\log_p N_1})^{-k} \ge (1-(1-3/k)^{\log_{k/3} (3M(k))^{t+1}})^{-k}.\] 

Next, using the fact $(1-x)^y \ge (1-x)^{\lceil y\rceil} \ge 1-x( y+1)$ for $x\in (0,1),y>0$, we get
\[(1-3/k)^{\log_k (3M(k))^{t+1}} \le 1-3k^{-1}(t+1)(\log_k 3M(k)+1) \le 1-\frac{3(t+1)}{\log_3 k} \] (noting $3M(k)+ 1<3^k$ for $k\ge 1$). Hence, \[(1-(1-3/k)^{\log_k (3M(k))^{t+1}})^{-k} \ge \left(\frac{\log_3 k}{3(t+1)}\right)^{k} = \exp(\omega_{k\to \infty}(k)).\] Meanwhile $|G|^2 = N^2 \le (3M(k))^{2(t+2)} < 3^{2(t+2)k} = \exp(O_t(k))$. 

Thus, for all sufficiently large $k$, we get that $(1-(1-1/p)^{\log_p N_1})^{-k} \ge |G|^2$, meaning the third bullet of Lemma~\ref{synth} is satisfied. So we may conclude that $K_{t+1}$ does exist. 

By induction, such $K_t$ exist for all $t\ge 0$, which implies our desired result.
\end{proof}

\end{thm}}


\begin{thebibliography}{}

\bibitem{berlekamp} E. R. Berlekamp, \textit{A Construction for Partitions Which Avoid Long Arithmetic Progressions,} in \textit{Canadian Mathematical Bulletin} \textbf{11} (1968), p. 409-414.

\bibitem{erdos} P. Erd\H{o}s and L. Lov\'asz, \textit{Problems and results on 3 chromatic hypergraphs and some related questions,} in \textit{Colloquia Mathematica Societatis Janos Bolyai} \textbf{10} (1975).

\bibitem{erdosturan} P. Erd\H{os} and P. Tur\'an, \textit{On Some Sequences of Integers,} in \textit{Journal of the London Mathematical Society} \textbf{11} (1936), p. 261-264.

\bibitem{gowers} W. T. Gowers, \textit{A new proof of Szemer\'edi's theorem,} in \textit{Geometric and Functional Analysis} \textbf{11} (2001), p. 465-588.

\bibitem{hunter} Z. Hunter, \textit{A short proof that \texorpdfstring{$w(3,k)\ge (1-o(1))k^2$}{w(3,k) > (1-o(1))k\^2},} preprint (September 2022).
\bibitem{huntergen} Z. Hunter, \textit{Lower bounds for multicolor van der Waerden numbers (via short exact sequences),} manuscript (available upon request). 

\bibitem{kozik} J. Kozik and D. Shabanov, \textit{Improved algorithms for colorings of simple hypergraphs and applications,} in \textit{Journal of Combinatorial Theory, Series B} \textbf{116} (2016), p. 312-332.
\bibitem{lefmann} H. Lefmann, \textit{A note on Ramsey numbers,} in \textit{Studia Sci. Math. Hungar.} \textbf{22} (1987), p. 445–446.

\bibitem{szabo} Z. Szab\'o, \textit{An application of Lov\'asz' local lemma --- A new lower bound for the van der Waerden number,} in \textit{Random Structures and Algorithms} \textbf{1} (1990), p. 343-360.

\end{thebibliography}
\end{document}